\documentclass{article}
\usepackage{amsmath}
\usepackage{amsfonts}
\usepackage{amsthm}
\usepackage{latexsym}
\usepackage{amssymb}
\usepackage{verbatim}
\usepackage{enumerate}
\font\logic=msam10 at 10pt
\newcommand{\forces}{\mbox{\logic\char'015}}
\newcommand{\restrict}{\mbox{\logic\char'026}}

\def\undertilde#1{{\baselineskip=0pt\vtop
  {\hbox{$#1$}\hbox{$\scriptscriptstyle\sim$}}}{}}

\newcommand{\less}{\mathord{<}}

\newcommand{\undom}{\emph{Undominated}}
\newcommand{\doms}{\emph{Dominating} }
\newcommand{\undoms}{\emph{Undominated} }

\newtheorem{thrm}{Theorem}[section]

\newtheoremstyle{hdefinition}%
  {\topsep}%
  {\topsep}%
  {\upshape}
  {}%
  {\bfseries}%
  {.}
  { }%
  {\thmnumber{#2 }\thmname{#1}\thmnote{ \rm(#3)}}%
\newtheoremstyle{hclaim}%
  {\topsep}%
  {\topsep}%
  {\itshape}%
  {}%
  {\bfseries}%
  {.}
  { }%
  {\thmname{#1}\thmnote{ \rm#3}}%

\theoremstyle{hclaim}
\newtheorem*{claim*}{Claim}

\theoremstyle{hdefinition}
\newtheorem{df}[thrm]{Definition}

\newtheorem{ques}[thrm]{Question}

\theoremstyle{hclaim}
\newtheorem{claim}{Claim}

\newcommand{\unspl}{\emph{unsplit}}
\newcommand{\spl}{\emph{split}}
\newcommand{\unspls}{\emph{unsplit} }
\newcommand{\spls}{\emph{split} }

\begin{document}

\title{The stationary set splitting game\thanks{The work in this paper began during the
Set Theory and Analysis program at the Fields Institute in the Fall
of 2002. The first author is supported in part by NSF grant
DMS-0401603, and thanks Juris Stepr\={a}ns, Paul Szeptycki and
Tetsuya Ishiu for helpful conversations on this topic. The research
of the second author is supported by the United States-Israel
Binational Science Foundation. This is the second author's
publication 902. Some of the research in this paper was conducted
during a visit by the first author to Rutgers University, supported
by NSF grant DMS-0600940.}}

\author{Paul B. Larson \and Saharon
Shelah}

\pagenumbering{arabic}
\maketitle

\begin{abstract}
The \emph{stationary set splitting game} is a game of perfect
information of length $\omega_{1}$ between two players, \unspls and
\spl, in which \unspls chooses stationarily many countable ordinals
and \spls tries to continuously divide them into two stationary
pieces. We show that it is possible in ZFC to force a winning
strategy for either player, or for neither. This gives a new
counterexample to $\Sigma^{2}_{2}$ maximality with a predicate for
the nonstationary ideal on $\omega_{1}$, and an example of a
consistently undetermined game of length $\omega_{1}$ with payoff
definable in the second-order monadic logic of order. We also show
that the determinacy of the game is consistent with Martin's Axiom
but not Martin's Maximum.
\end{abstract}

\noindent MSC2000: 03E35; 03E60
\vspace{\baselineskip}

The \emph{stationary set splitting game} ($\mathcal{SG}$) is a game
of perfect information of length $\omega_{1}$ between two players,
\unspls and \spl. In each round $\alpha$, \unspls either accepts or
rejects $\alpha$. If \unspls accepts $\alpha$, then $\spl$ puts
$\alpha$ into one of two sets $A$ and $B$. If \unspls rejects
$\alpha$ then $\spl$ does nothing. After all $\omega_{1}$ many
rounds have been played, \spls wins if \unspls has not accepted
stationarily often, or if both of $A$ and $B$ are stationary.

In this note we prove that it is possible to force a winning
strategy for either player in $\mathcal{SG}$, or for neither, and we
also show that the determinacy of $\mathcal{SG}$ is consistent with
Martin's Axiom but not Martin's Maximum \cite{FMS}.
We also present two guessing principles, $\mathcal{C}_{s}$
(\emph{club for} \spl) and $\mathcal{D}_{u}$ (\emph{diamond for}
\unspl), which imply the existence of winning strategies for \spls
and \unspl, respectively (and are therefore incompatible; see
Theorems \ref{pos2} and \ref{pos3}). These principles may be of
independent interest.

\section{Winning strategies}

\subsection{Strategies for \spl}

A collection $\mathcal{X}$ of countable sets is \emph{stationary} if
for every function $F \colon [\bigcup\mathcal{X}]^{\less\omega} \to
\bigcup\mathcal{X}$ there is an element of $\mathcal{X}$ closed
under $F$. A set $\mathcal{X}$ of countable sets is \emph{projective
stationary} \cite{FJ} if for every stationary $S \subset \omega_{1}$
the set of $X \in \mathcal{X}$ with $X \cap \omega_{1} \in S$ is
stationary. We note that a partial order $P$ is said to be
\emph{proper} if forcing with $P$ preserves the stationarity (in the
sense above) of stationary sets from the ground model (see
\cite{Shf}).

The following statement holds in fine structural models such as $L$.
It is a strengthening of the principle $(+)$ used in \cite{L}.
Justin Moore has pointed out to us that his Mapping Reflection
Principle \cite{M} implies the failure of $(+)$. We note also that
in the statement of $(+)$, ``projective stationary" can be replaced
with ``club" without strengthening the statement. We do not know if
that is the case for $\mathcal{C}{+}$.

\begin{df} Let $\mathcal{C}+$ be the statement that there exists a
projective stationary set $\mathcal{X}$ consisting of countable
elementary substructures of $H(\aleph_{2})$ such that for all $X$,
$Y$ in $\mathcal{X}$ with $X \cap \omega_{1} = Y \cap \omega_{1}$,
either every for every club $C \subset \omega_{1}$ in $X$ there is a
club $D \subset \omega_{1}$ in $Y$ with $D \cap X \subset C \cap X$,
or for every for every club $D \subset \omega_{1}$ in $Y$ there is a
club $C \subset \omega_{1}$ in $X$ with $C \cap X \subset D \cap X$.
\end{df}

Given a partial run of $\mathcal{SG}$ of length $\alpha$, we let
$E_{\alpha}$ be the set of $\beta < \alpha$ accepted by \unspl, and
we let $A_{\alpha}$, $B_{\alpha}$ be the partition of $E_{\alpha}$
chosen by \spl.


\begin{thrm}\label{pos} If $\mathcal{C}+$ holds then $\spls$ has a
winning strategy in $\mathcal{SG}$.
\end{thrm}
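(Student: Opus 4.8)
The plan is to read a winning strategy for \spls directly off a witness to $\mathcal{C}+$. Fix a projective stationary $\mathcal{X}$ as in the definition of $\mathcal{C}+$. For $\delta<\omega_{1}$ and $X\in\mathcal{X}$ with $X\cap\omega_{1}=\delta$, let $\mathcal{F}_{X}$ be the filter on $\delta$ generated by the sets $C\cap\delta$ for $C\in X$ a club subset of $\omega_{1}$; since $X$ is closed under finite intersection and every club lying in $X$ contains $\delta$ (hence is cofinal in $\delta$), $\mathcal{F}_{X}$ is a proper filter, and I call $Z\subseteq\delta$ \emph{$\mathcal{F}_{X}$-positive} if $Z$ meets $C\cap\delta$ for every club $C\in X$. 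The first thing I would record is that the comparison clause of $\mathcal{C}+$ is exactly the statement that, for each $\delta$, the family $\{\mathcal{F}_{X}:X\in\mathcal{X},\ X\cap\omega_{1}=\delta\}$ is linearly ordered by inclusion.

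The strategy $\sigma$ I would give \spls is: at a round $\delta$ at which \unspls accepts $\delta$, inspect $A_{\delta}$ and $B_{\delta}$ and play --- put $\delta$ into $A$ if there is $X\in\mathcal{X}$ with $X\cap\omega_{1}=\delta$ for which $B_{\delta}$ is $\mathcal{F}_{X}$-positive while $A_{\delta}$ is not; otherwise put $\delta$ into $B$ if there is such an $X$ with the roles of $A_{\delta},B_{\delta}$ interchanged; and otherwise (in particular whenever $\delta$ is not of the form $X\cap\omega_{1}$ for any $X\in\mathcal{X}$) put $\delta$ into $A$. Because the filters $\mathcal{F}_{X}$ over a common $\delta$ form a chain, the two displayed alternatives are mutually exclusive: witnesses $X$ and $Y$ for the first and second would be comparable, and whichever of $\mathcal{F}_{X},\mathcal{F}_{Y}$ contained the other would promote one of the ``not $\mathcal{F}$-positive'' clauses to ``$\mathcal{F}$-positive''. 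So $\sigma$ is a well-defined, legal, continuous strategy; informally it keeps $A$ and $B$ level in the chain of traces, always feeding whichever side has lagged behind.

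To verify $\sigma$ is winning I would take any run $r$ consistent with $\sigma$ in which \unspls accepts on a stationary set $E$ and show $A$ and $B$ are both stationary; I treat $A$, the case of $B$ being symmetric via the other clause. Suppose $A$ were disjoint from a club $C$; then $E\cap C\subseteq B$. Since $\mathcal{X}$ is projective stationary and $\{\,Y\in[H(\aleph_{2})]^{\omega}:C,E\in Y\,\}$ is club, I can choose $X\in\mathcal{X}$ with $C,E\in X$ and $\delta:=X\cap\omega_{1}\in E\cap C$. By elementarity $X$ sees $E\cap C$ as stationary, so $E\cap C\cap\delta$ meets $D\cap\delta$ for every club $D\in X$, i.e.\ $E\cap C\cap\delta$ is $\mathcal{F}_{X}$-positive; as $E\cap C\cap\delta\subseteq B_{\delta}$, so is $B_{\delta}$. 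On the other hand $A_{\delta}\cap C=\emptyset$ and $C\cap\delta\in\mathcal{F}_{X}$, so $A_{\delta}$ is not $\mathcal{F}_{X}$-positive. Hence at round $\delta$ the structure $X$ witnesses the first clause, $\sigma$ puts $\delta$ into $A$, and this contradicts $\delta\in C$ with $A\cap C=\emptyset$.

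The step I expect to be the real obstacle is finding a rule that defends $A$ and $B$ at once. The naive idea --- top up whichever of $A,B$ currently looks nonstationary --- is both ambiguous (different substructures over the same $\delta$ disagree about it) and, once the ambiguity is broken arbitrarily, defeated by \unspls playing a thin, non-reflecting stationary set, along which $A_{\delta}$ and $B_{\delta}$ are always bounded so that the rule pours everything into a single side. Linearity of the chain of filter traces --- the comparison clause of $\mathcal{C}+$ --- is exactly what removes the ambiguity and makes ``which side is behind'' a coherent dichotomy; this is also the feature separating $\mathcal{C}+$ from bare projective stationarity (which holds under Martin's Maximum and so cannot by itself give \spls a winning strategy). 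The points I would leave as routine are that the rule depends only on the partial run, the handling of non-limit rounds and of the nonstationarily many $\delta$ outside $\{X\cap\omega_{1}:X\in\mathcal{X}\}$, and the remark that \spls also wins whenever \unspls fails to accept stationarily often.
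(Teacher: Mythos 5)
Your proof is correct and follows essentially the same route as the paper's: both read the strategy directly off the witness to $\mathcal{C}+$, use the comparison clause (linearity of the club-trace filters over a fixed $\delta$) to make the two feeding clauses mutually exclusive, and verify the strategy by using projective stationarity to find $X \in \mathcal{X}$ containing $E$ and $C$ with $X \cap \omega_{1} \in E \cap C$. The only difference is cosmetic: the paper secures mutual exclusivity by restricting to models containing a stationary end-extension of $E_{\delta}$, whereas you secure it by additionally requiring the opposite trace to be $\mathcal{F}_{X}$-positive.
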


\begin{proof}
Let $\mathcal{X}$ be a set of countable elementary submodels of
$H(\aleph_{2})$ witnessing $\mathcal{C}+$, and for each $\alpha <
\omega_{1}$ let $\mathcal{X}_{\alpha}$ be the set of $X \in
\mathcal{X}$ with $X \cap \omega_{1} = \alpha$. Let $Z$ be the set
of $\alpha < \omega_{1}$ such that $\mathcal{X}_{\alpha}$ is
nonempty (since $\mathcal{X}$ is projective stationary, this set
contains a club).

Play for $\spls$ as follows. In round $\alpha \in Z$, if \unspls
accepts $\alpha$, let $\mathcal{Y}_{\alpha}$ be the set of all $X
\in \mathcal{X}_{\alpha}$ such that $X$ contains a stationary subset
of $\omega_{1}$, $E_{X}$, such that $E_{X} \cap \alpha =
E_{\alpha}$. If $\mathcal{Y}_{\alpha} = \emptyset$, put $\alpha \in
A_{\alpha + 1}$. Otherwise, since every club subset of $\omega_{1}$
in every member of $\mathcal{Y}_{\alpha}$ intersects $E_{\alpha}$,
there cannot be two club subsets of $\omega_{1}$ in
$\bigcup\mathcal{Y}_{\alpha}$ , one disjoint from $A_{\alpha}$ and
one disjoint from $B_{\alpha}$, since some club subset of
$\omega_{1}$ in $\bigcup\mathcal{Y}_{\alpha}$ would be contained in
both of these clubs. If any member of $\mathcal{Y}_{\alpha}$
contains a club subset of $\omega_{1}$ disjoint from $A_{\alpha}$,
then put $\alpha$ in $A_{\alpha+1}$, and if any member of
$\mathcal{Y}_{\alpha}$ contains a club subset of $\omega_{1}$
disjoint from $B_{\alpha}$, then put $\alpha$ in $B_{\alpha+1}$. If
neither case holds, put $\alpha \in A_{\alpha + 1}$.


Let $E$ be the play by $\unspls$ in a run of $\mathcal{SG}$ where
$\spls$ has played by this strategy, and let $A$ and $B$ be the
corresponding play by $\spls$. Let $C$ be a club subset of
$\omega_{1}$ and supposing that $E$ is stationary, fix $X \in
\mathcal{X}$ containing $E$, $A$, $B$ and $C$ with $X \cap
\omega_{1} \in E \cap C$. Then if $A \cap C \cap X \cap \omega_{1} =
\emptyset$, then $X \cap \omega_{1} \in A \cap C$, and if $B \cap C
\cap X \cap \omega_{1} = \emptyset$, then $X \cap \omega_{1} \in B
\cap C$, which shows that $C$ does not witness that $\unspls$ won
this run of the game.
\end{proof}

The following fact, in conjunction with Theorem \ref{pos}, shows
that Martin's Axiom is consistent with the existence of a winning
strategy for \spl.

\begin{thrm} The statement $\mathcal{C}+$ is preserved by forcing with
c.c.c. partial orders.
\end{thrm}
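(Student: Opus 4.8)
The plan is to show that $\mathcal{C}{+}$ is preserved by any c.c.c.\ forcing $P$. Let $\mathcal{X}$ be a projective stationary family of countable elementary submodels of $H(\aleph_2)$ witnessing $\mathcal{C}{+}$ in $V$, and let $\mathcal{X}'$ be the family of all $X[G] \cap H(\aleph_2)^{V[G]}$, or more carefully, of all $N \in (V[G])$ of the form (the Skolem hull in $H(\aleph_2)^{V[G]}$ of) $X \cup \{P, G\}$ for $X \in \mathcal{X}$ with $P \in X$ (which is all but a nonstationary set of them). The first thing to check is that $\mathcal{X}'$ remains projective stationary in $V[G]$. This is the standard c.c.c.\ projective-stationarity preservation argument: given a name $\dot S$ for a stationary subset of $\omega_1$ and a name $\dot F \colon [H(\aleph_2)^{V[G]}]^{<\omega} \to H(\aleph_2)^{V[G]}$, use c.c.c.\ to cover $\dot F$ by a ground-model function $F^* \colon [H(\aleph_2)^V]^{<\omega} \to [H(\aleph_2)^V]^{\le\omega}$ (and similarly reduce $\dot S$ to a ground-model stationary set via the fact that c.c.c.\ forcing preserves stationarity of subsets of $\omega_1$ and that $\{\alpha : p \Vdash \check\alpha \in \dot S\}$ contains a ground-model stationary set for some $p$), find $X \in \mathcal{X}$ closed under $F^*$ with $X \cap \omega_1$ in the relevant stationary set and $P, \dot S, \dot F \in X$, and observe that the corresponding $N \in \mathcal{X}'$ is closed under $\dot F$ with $N \cap \omega_1 = X \cap \omega_1$ forced into $\dot S$.

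The heart of the proof is the dichotomy clause: for $N, N' \in \mathcal{X}'$ with $N \cap \omega_1 = N' \cap \omega_1$, one must verify the club-comparability alternative. Such $N, N'$ come from $X, X' \in \mathcal{X}$ with $X \cap \omega_1 = X' \cap \omega_1$, so in $V$ one of the two alternatives of $\mathcal{C}{+}$ holds for $X, X'$; say for every club $C \subset \omega_1$ in $X$ there is a club $D \subset \omega_1$ in $X'$ with $D \cap X \subset C \cap X$. I claim the same alternative holds for $N, N'$ in $V[G]$. The key point is that every club subset of $\omega_1$ lying in $N = X[G]$ has, below it, a club subset of $\omega_1$ lying in $X$ itself: if $\dot C \in X$ is a $P$-name and $G \cap X$ decides enough, then by c.c.c.\ inside $X$ the set $C^* = \{\alpha : $ some condition in $P \cap X$ forces $\check\alpha \in \dot C\}$ — or rather $\{\alpha : p \Vdash \check\alpha \in \dot C$ for all $p$ in a maximal antichain$\}$ — is a ground-model club in $X$ contained in $C^G$. (Concretely: for club $C \in N$, pick a name $\dot C \in X$; the ground-model set $\{\alpha < \omega_1 : \Vdash \check\alpha \in \dot C\}$ need not be club, but $\{\alpha : (\exists p \in G)\, p \Vdash \sup(\dot C \cap \alpha) = \alpha\}$ works, and by c.c.c.\ a ground-model club in $X$ can be extracted.) Then given a club $E \in N$, extract a ground-model club $C \subset E$ with $C \in X$, apply the $V$-alternative to get a club $D \in X'$ with $D \cap X \subset C \cap X$; since $D \in X' \subset N'$ and $X \cap \omega_1 = N \cap \omega_1 = N' \cap \omega_1$, we get $D \cap N' \cap \omega_1 = D \cap X \cap \omega_1 \subset C \cap X \cap \omega_1 \subset E \cap N$, as required. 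Finally, $N$ is an elementary submodel of $H(\aleph_2)^{V[G]}$ by the usual argument ($X \prec H(\aleph_2)^V$, $P \in X$, and $H(\aleph_2)^{V[G]}$ is the set of values of $P$-names in $H(\aleph_2)^V$).

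The main obstacle I anticipate is the club-extraction step — making precise that a club subset of $\omega_1$ in the generic extension that happens to lie in the small model $N = X[G]$ necessarily contains a ground-model club lying in $X$. This uses c.c.c.\ (so that a name for a club can be reflected into $X$ and a maximal antichain deciding each ``$\alpha \in \dot C$'' is countable, hence an element of $X$), together with the absoluteness of ``$C$ is club'' and a closure argument showing the extracted set is genuinely in $X$ rather than merely in $V$. Once this lemma is isolated and stated cleanly, both the projective-stationarity preservation and the dichotomy clause follow by the routine reflection arguments sketched above, and the theorem is proved.
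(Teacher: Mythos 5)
Your proposal is correct and follows essentially the same route as the paper: form the family of models $X[G]\cap H(\aleph_{2})^{V[G]}$, observe that the dichotomy clause transfers because (by c.c.c.) every club subset of $\omega_{1}$ in the extension --- and, relativized, every club in $X[G]$ --- contains a ground-model club lying in $X$, and then verify projective stationarity by the standard reflection/genericity argument. The one adjustment you need is the one the paper makes in its definition of $\mathcal{X}[G]$: since $P$ need not belong to $H(\aleph_{2})$, the ground-model hulls should be taken as countable $X\prec H(\gamma)$ for a large regular $\gamma$ with $X\cap H(\aleph_{2})\in\mathcal{X}$, so that $P$, the names, and the countable maximal antichains are actually elements of $X$; with that change your club-extraction lemma and the rest go through as written.
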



\begin{proof} Let $P$ be a c.c.c. forcing and let
$\mathcal{X}$ witness $\mathcal{C}+$. Let $\gamma$ be a regular
cardinal greater than $\aleph_{2}$ and $2^{|P|}$. Let $G \subset P$
be a $V$-generic filter, and let $$\mathcal{X}[G] =  \{ X[G] \cap
H(\aleph_{2})^{V[G]} : X \prec H(\gamma)^{V}, X \cap
H(\aleph_{2})^{V} \in \mathcal{X} \}.$$ Since every club subset of
$\omega_{1}$ in $V[G]$ contains one in $V$, in order to show that
$\mathcal{X}[G]$ witnesses $\mathcal{C}+$ in $V[G]$, it suffices to
show that $\mathcal{X}[G]$ is projective stationary there. Fix  a
$P$-name $\rho$ for a function from
$[H(\aleph_{2})^{V[G]}]^{\less\omega}$ to $H(\aleph_{2})^{V[G]}$.
For any countable $X \prec H(\gamma)$ with $X \cap H(\aleph_{2}) \in
\mathcal{X}$ and $\rho \in X$, $X[G] \cap H(\aleph_{2})^{V[G]}$ is
in $\mathcal{X}[G]$ and closed under the realization of $\rho$. Fix
a $P$-name $\tau$ for a stationary subset of $\omega_{1}$ and a
condition $p \in P$. Let $S$ be the set of countable ordinals forced
to be in $\tau$ by some condition below $p$. Then exist a countable
$X \prec H(\gamma)$ with $X \cap H(\aleph_{2}) \in \mathcal{X}$, $X
\cap \omega_{1} \in S$ and $\rho \in X$ and a condition $q$ below
$p$ forcing that $X[\dot{G}] \cap \omega_{1}$ (where $\dot{G}$ is
the name for the generic filter) is in the realization of $\tau$. By
genericity, then, $\mathcal{X}[G]$ is projective stationary.
\end{proof}

We do not know how to force $\mathcal{C}+$, however, and use a
different principle to force the existence of a winning strategy for
\spl.

\begin{df} Let $\mathcal{C}_{s}$ be the statement that there exist
$c_{\alpha}$ ($\alpha < \omega_{1}$ limit) such that each
$c_{\alpha}$ is a sequence $\langle a^{\alpha}_{\beta} : \beta <
\gamma_{\alpha} \rangle$ (for some countable $\gamma_{\alpha}$) of
cofinal subsets of $\alpha$ of orderype $\omega$ and
\begin{itemize}
\item for all limit $\alpha < \omega_{1}$ and all $\beta < \beta' <
\gamma_{\alpha}$, $a^{\alpha}_{\beta'}\setminus a^{\alpha}_{\beta}$
is finite;
\item for every club $C \subset \omega_{1}$ and every stationary
$E \subset \omega_{1}$ there exists an $a^{\alpha}_{\beta}$ with
$\alpha \in E$ such that $a^{\alpha}_{\beta} \setminus C$ is finite
and $a^{\alpha}_{\beta} \cap E$ is infinite.
\end{itemize}
\end{df}

The principle $\mathcal{C}_{s}$ also holds in fine structural models
such as $L$. The winning strategy for \spls given by
$\mathcal{C}_{s}$ is very similar to the one given by
$\mathcal{C}+$.

\begin{thrm}\label{pos2} If $\mathcal{C}_{s}$ holds then $\spls$ has a
winning strategy in $\mathcal{SG}$.
\end{thrm}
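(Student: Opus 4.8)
The plan is to mimic the proof of Theorem~\ref{pos}, with the sets $a^{\alpha}_{\beta}$ in the role played there by the traces on $\alpha$ of the clubs belonging to the models $X\in\mathcal{X}_{\alpha}$, and with the ``$a^{\alpha}_{\beta'}\setminus a^{\alpha}_{\beta}$ finite'' clause of $\mathcal{C}_{s}$ taking over the job done there by the comparison clause of $\mathcal{C}+$. So fix sequences $c_{\alpha}=\langle a^{\alpha}_{\beta}:\beta<\gamma_{\alpha}\rangle$ (for $\alpha<\omega_{1}$ limit) witnessing $\mathcal{C}_{s}$, and have \spl\ play as follows. In round $\alpha$, if \unspl\ rejects $\alpha$ there is nothing to do; if \unspl\ accepts $\alpha$ and $\alpha$ is not a limit ordinal, put $\alpha$ in $A_{\alpha+1}$ (the choice does not matter). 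If \unspl\ accepts a limit $\alpha$, let $W_{\alpha}=\{\beta<\gamma_{\alpha}: a^{\alpha}_{\beta}\cap E_{\alpha}\text{ is infinite}\}$; if there is $\beta\in W_{\alpha}$ with $a^{\alpha}_{\beta}\cap A_{\alpha}$ finite, put $\alpha$ in $A_{\alpha+1}$; otherwise, if there is $\beta\in W_{\alpha}$ with $a^{\alpha}_{\beta}\cap B_{\alpha}$ finite, put $\alpha$ in $B_{\alpha+1}$; otherwise put $\alpha$ in $A_{\alpha+1}$. Since each $a^{\alpha}_{\beta}$ is a subset of $\alpha$, the intersections $a^{\alpha}_{\beta}\cap E_{\alpha}$, $a^{\alpha}_{\beta}\cap A_{\alpha}$, $a^{\alpha}_{\beta}\cap B_{\alpha}$ depend only on the partial run of length $\alpha$, so this is a genuine strategy.

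I would first observe, using the almost-decreasing clause, that for each limit $\alpha$ the first two clauses above are mutually exclusive: if $\beta_{1},\beta_{2}\in W_{\alpha}$ with $a^{\alpha}_{\beta_{1}}\cap A_{\alpha}$ and $a^{\alpha}_{\beta_{2}}\cap B_{\alpha}$ both finite, then for $\beta^{*}=\max(\beta_{1},\beta_{2})$ the set $a^{\alpha}_{\beta^{*}}$ differs finitely from a subset of each of $a^{\alpha}_{\beta_{1}}$ and $a^{\alpha}_{\beta_{2}}$, so both $a^{\alpha}_{\beta^{*}}\cap A_{\alpha}$ and $a^{\alpha}_{\beta^{*}}\cap B_{\alpha}$ are finite, whence $a^{\alpha}_{\beta^{*}}\cap E_{\alpha}=(a^{\alpha}_{\beta^{*}}\cap A_{\alpha})\cup(a^{\alpha}_{\beta^{*}}\cap B_{\alpha})$ is finite, contradicting $\beta^{*}\in W_{\alpha}$.

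Now suppose, for contradiction, that \unspl\ wins a run of $\mathcal{SG}$ in which \spl\ follows this strategy, say with \unspl\ playing $E$ and \spl\ playing $A,B$. Then $E$ is stationary and one of $A$, $B$ is nonstationary; assume it is $A$, the other case being symmetric (and there using the mutual exclusivity just proved to see that the ``$B$'' clause does get applied). Fix a club $C\subseteq\omega_{1}$ with $C\cap A=\emptyset$, and apply the second clause of $\mathcal{C}_{s}$ to $C$ and $E$ (legitimate since $E$ is stationary) to obtain a limit $\alpha\in E$ and $\beta<\gamma_{\alpha}$ with $a^{\alpha}_{\beta}\setminus C$ finite and $a^{\alpha}_{\beta}\cap E$ infinite. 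Since $a^{\alpha}_{\beta}\subseteq\alpha$ we get $a^{\alpha}_{\beta}\cap E_{\alpha}=a^{\alpha}_{\beta}\cap E$ infinite, so $\beta\in W_{\alpha}$, and $a^{\alpha}_{\beta}\cap A_{\alpha}\subseteq a^{\alpha}_{\beta}\setminus C$ is finite; hence the strategy put $\alpha$ in $A_{\alpha+1}$, so $\alpha\in A$. On the other hand $a^{\alpha}_{\beta}\cap C$ is cofinal in $\alpha$, so $\alpha\in C$; thus $\alpha\in A\cap C$, contradicting $C\cap A=\emptyset$. This shows \spl\ wins every run, completing the proof.

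I expect the only point requiring thought is recognizing that the almost-decreasing clause of $\mathcal{C}_{s}$ supplies exactly the ``directedness'' that, in the proof of Theorem~\ref{pos}, prevents the family $\mathcal{Y}_{\alpha}$ from simultaneously containing a club disjoint from $A_{\alpha}$ and one disjoint from $B_{\alpha}$; once this and the trivial remark that $a^{\alpha}_{\beta}\setminus C$ finite forces $\alpha\in C$ (the analogue of the hand-arranged requirement $X\cap\omega_{1}\in C$ in the $\mathcal{C}+$ argument) are in place, the verification is routine.
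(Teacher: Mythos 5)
Your proof is correct and follows essentially the same route as the paper's: the same strategy (put $\alpha$ into the side that some $a^{\alpha}_{\beta}$ with infinite trace on $E_{\alpha}$ meets only finitely), the same appeal to the mod-finite-decreasing clause to show the two cases exclude each other, and the same use of the capturing clause of $\mathcal{C}_{s}$ in the verification. You merely spell out a couple of details the paper leaves implicit (e.g., that $a^{\alpha}_{\beta}\setminus C$ finite forces $\alpha\in C$).
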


\begin{proof}
Let $a^{\alpha}_{\beta}$ ($\alpha < \omega_{1}$ limit, $\beta <
\gamma_{\alpha}$) witness $\mathcal{C}_{s}$. Play for \spls as
follows. In round $\alpha$, $\alpha$ a limit, if \unspls has
accepted $\alpha$ and if some $a^{\alpha}_{\beta}$ intersects
$A_{\alpha}$ infinitely and $B_{\alpha}$ finitely, then put $\alpha$
in $B_{\alpha + 1}$. If some $a^{\alpha}_{\beta}$ intersects
$B_{\alpha}$ infinitely and $A_{\alpha}$ finitely, then put $\alpha$
in $A_{\alpha + 1}$. Since the $a^{\alpha}_{\beta}$'s $(\beta <
\gamma_{\alpha})$ are $\subset$-decreasing mod finite, both cases
cannot occur. If neither case occurs, put $\alpha$ in $A_{\alpha +
1}$.

Let $E$ be the play by $\unspls$ in a run of $\mathcal{SG}$ where
$\spls$ has played by this strategy, and let $A$ and $B$ be the
corresponding play by $\spls$. Let $C$ be a club subset of
$\omega_{1}$ and supposing that $E$ is stationary, fix
$a^{\alpha}_{\beta}$ with $\alpha \in E$ such that
$a^{\alpha}_{\beta} \setminus C$ is finite and $a^{\alpha}_{\beta}
\cap E$ is infinite. Then if $A \cap a^{\alpha}_{\beta}$ is finite,
then $\alpha \in A \cap C$, and if $B \cap a^{\alpha}_{\beta}$ is
finite, then $\alpha \in B \cap C$, which shows that $C$ does not
witness that $\unspls$ won this run of the game.
\end{proof}

A partial order $P$ is said to be \emph{strategically}
$\omega$-closed if there exists a function $f \colon P^{\less\omega}
\to \mathcal{P}(P)$ such that whenever $\langle p_{i} : i \leq
n\rangle$ is a finite descending sequence in $P$,  $f(\langle p_{i}
: i \leq n \rangle)$ is a dense subset below $p_{n}$ and, whenever
$\langle p_{i} : i < \omega \rangle$ is a descending sequence in $P$
such that for each $n$ there exists a $j$ with $$p_{j} \in f(\langle
p_{i} : i \leq n\rangle),$$ the sequence has a lower bound in $P$.
It is easy to see that strategic $\omega$-closure is equal to the
property that for every countable $X \prec H((2^{|P|})^{+})$ and
every $(X,P)$-generic filter $g$ contained in $X$ there is a
condition in $P$ extending $g$.

Let us say that a set $a$ \emph{captures} a pair $E,C$ if $a
\setminus C$ is finite and $a \cap E$ is infinite. Given $A \subset
\omega_{1}$, let $\mathbb{C}(A)$ be the partial order which adds a
club subset of $A$ by initial segments. We force $\mathcal{C}_{s}$
by first adding a potential $\mathcal{C}_{s}$-sequence by initial
segments, and then iterating to kill off every counterexample.

We refer the reader to \cite{Shf} for background on countable
support iterations of proper forcing.

\begin{thrm}\label{csforce} Suppose that CH and $2^{\aleph_{1}} = \aleph_{2}$ hold.
Let $\bar{P} = \langle P_{\eta}, \undertilde{Q}_{\eta} : \eta <
\omega_{2} \rangle$ be a countable support iteration such that
$P_{0}$ is the partial order consisting of sequences $\langle
c_{\alpha} : \alpha < \delta \text{ limit}\rangle$, for some
countable ordinal $\delta$, such that each $c_{\alpha}$ is a
sequence $\langle a^{\alpha}_{\beta} : \beta <
\gamma_{\alpha}\rangle$ (for some countable ordinal
$\gamma_{\alpha}$) of cofinal subsets of $\alpha$ of ordertype
$\omega$, deceasing by mod-finite inclusion (and $P_{0}$ is ordered
by extension). Suppose that the remainder of $\bar{P}$ satisfies the
following conditions.
\begin{itemize}
\item For each nonzero $\eta < \omega_{2}$ there is a $P_{\eta}$-name
$\tau_{\eta}$ for a subset of $\omega_{1}$ such that if
$(\tau_{\eta})_{G_{\eta}}$ (where $G_{\eta}$ is the restriction of
the generic filter to $P_{\eta}$) is stationary in the $P_{\eta}$
extension and there exists a club $C \subset \omega_{1}$ in this
extension such that no $a^{\alpha}_{\beta}$ with $\alpha \in
\tau_{G_{\eta}}$ captures the pair $\tau_{G_{\eta}}, C$, then
$\undertilde{Q}_{\eta}$ is $\mathbb{C}(\omega_{1} \setminus
(\tau_{\eta})_{G_{\eta}})$ (and otherwise, $\undertilde{Q}_{\eta}$
is $\mathbb{C}(\omega_{1})$).
\item For every pair $E,C$ of subsets of $\omega_{1}$ in any
$P_{\eta}$-extension $(\eta < \omega_{2})$, if $E$ is stationary in
this extension and $C$ is club and no $a^{\alpha}_{\beta}$ with
$\alpha \in E$ captures $E,C$, then there is a $\rho \in [\eta,
\omega_{2})$ such that if $E$ is stationary in the $P_{\rho}$
extension, then $\undertilde{Q}_{\rho}$ is $\mathbb{C}(\omega_{1}
\setminus E)$.
\end{itemize}
Then $\bar{P}$ is strategically $\omega$-closed, and
$\mathcal{C}_{s}$ holds in the $\bar{P}$-extension. Furthermore, in
the $\bar{P}$ extension, $\Diamond(S)$ holds for every stationary $S
\subset \omega_{1}$.
\end{thrm}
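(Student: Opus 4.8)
The plan is to verify three separate assertions: (i) $\bar P$ is strategically $\omega$-closed; (ii) the generic sequence added at stage $0$ (as refined by the iteration) witnesses $\mathcal{C}_s$ in the final extension; and (iii) $\Diamond(S)$ holds for every stationary $S \subset \omega_1$ in the extension. Since the first clause of the $\mathcal{C}_s$-sequence (mod-finite decreasing) is guaranteed by the definition of $P_0$ and preserved by the tails (which only add new $a^\alpha_\beta$'s at the top, or rather, the iterands $\mathbb{C}(\cdot)$ add clubs and never touch the sequence), the real content of (ii) is the capturing clause.

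For strategic $\omega$-closure, I would use the characterization given just before the theorem: it suffices to show that for every countable $X \prec H((2^{|\bar P|})^+)$ and every $(X,\bar P)$-generic filter $g \subseteq X$, there is a condition in $\bar P$ below $g$. This is the standard argument for countable support iterations of strategically $\omega$-closed forcings (see \cite{Shf}): one checks $P_0$ is strategically $\omega$-closed (given a descending sequence of conditions meeting the relevant dense sets, the strategy ensures the union of the $c_\alpha$'s is a legitimate condition — here the strategy would, at appropriate finite stages, demand that $\delta$ grow and that for each new limit $\alpha$ below the supremum a cofinal $\omega$-sequence get added, so that the limit condition has domain a full initial segment of limit ordinals, which is exactly $P_0$-membership), and that each $\mathbb{C}(A)$ for a club-or-costationary-complement $A$ is strategically $\omega$-closed (trivially $\sigma$-closed when $A$ contains a club; when $A = \omega_1 \setminus E$ with $E$ stationary but not capturing any $a^\alpha_\beta$, one needs that $A$ is still "large enough" to add a club through — this is where the hypothesis that $E$ fails to be captured matters, but since a genuinely stationary $E$ would obstruct $\mathbb{C}(\omega_1 \setminus E)$, the design is such that $Q_\rho$ is only switched to $\mathbb{C}(\omega_1 \setminus E)$ at a stage $\rho$ where one has committed to shooting a club avoiding $E$, and the full iteration witnesses consistency of this; the key point is that no stationary-in-$V^{\bar P}$ set can fail to be captured, so these shots never actually destroy stationarity of a set that survives to the end). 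The iteration being over $\omega_2$ with countable support and $2^{\aleph_1} = \aleph_2$ lets one enumerate all potential pairs $E,C$ with a bookkeeping, which is precisely clause two of the hypothesis.

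For clause (ii), let $\langle a^\alpha_\beta \rangle$ be the sequence read off from $G_0$ together with all the $\mathbb{C}(\cdot)$-generic clubs (equivalently: the union of the $P_0$-part of the generic together with whatever is added). Suppose toward a contradiction that in $V^{\bar P}$ there is a club $C$ and a stationary $E$ such that no $a^\alpha_\beta$ with $\alpha \in E$ captures $E,C$. By properness and a reflection/$\Delta$-system argument, $E$ and $C$ appear in some intermediate extension $V^{P_\eta}$ with $\eta < \omega_2$, and $E$ is stationary there and $C$ is club there and — crucially, since new clubs in $V^{\bar P}$ contain ground-model clubs and the $a^\alpha_\beta$'s are fixed — no $a^\alpha_\beta$ with $\alpha \in E$ captures $E,C$ already in $V^{P_\eta}$. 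Then clause two of the hypothesis gives a $\rho \geq \eta$ at which $\undertilde Q_\rho = \mathbb{C}(\omega_1 \setminus E)$ provided $E$ is still stationary at stage $\rho$; but forcing with $\mathbb{C}(\omega_1 \setminus E)$ adds a club disjoint from $E$, contradicting that $E$ is stationary in $V^{\bar P}$. (If $E$ were no longer stationary at stage $\rho$ then a fortiori it is nonstationary in $V^{\bar P}$, again a contradiction. So $E$ cannot exist, and capturing holds.) I expect this back-and-forth — pinning down the stage at which a counterexample reflects and arguing that it persists exactly because clubs go up and the guessing sequence stays fixed — to be the main obstacle, mostly because one must be careful that $C$ in $V^{\bar P}$ being a witness really does pull back to $V^{P_\eta}$, which needs the observation that it suffices to work with ground-model clubs throughout.

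Finally, for $\Diamond(S)$: in the $\bar P$-extension CH holds (the iteration is proper, has size $\aleph_2$, and $\omega_2$-c.c. by the usual CH-plus-countable-support counting, so no new reals are added beyond... actually more simply, strategic $\omega$-closure adds no new reals at all, so CH is preserved and $2^{\aleph_1} = \aleph_2$ too). It is a classical fact that if $V \models \Diamond(S)$ then any strategically $\omega$-closed — indeed any $\omega_1$-preserving forcing adding no new subsets of $\omega$ — that moreover adds no new subsets of $\omega_1$... but here $\bar P$ does add subsets of $\omega_1$, so instead I would invoke the standard lemma that a strategically $\omega$-closed forcing of size $\leq \aleph_2$ over a model of CH forces $\Diamond(S)$ for every stationary $S$: build a $\Diamond$-sequence generically by a further density argument folded into the strategic closure, or observe that such a forcing is $\aleph_2$-c.c. and use that $\Diamond(S)$ over CH is forced by $\mathrm{Add}(\omega_1,1)$ which embeds densely below conditions, etc. In fact the cleanest route is: strategic $\omega$-closure implies no new $\omega$-sequences, hence CH persists; then by a theorem of Shelah any proper forcing that is strategically $\omega$-closed and forces $2^{\aleph_0} = \aleph_1$ forces $\Diamond(S)$ for all stationary $S$ — alternatively, one shows directly that a bookkeeping-augmented version of $\bar P$ generically diagonalizes all $\aleph_2$-many names for subsets of $\omega_1$, using the strategic closure to take limits. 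I would relegate this last part to a remark citing \cite{Shf}, since it is routine given the closure already established.
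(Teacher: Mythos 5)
Your reduction of the $\mathcal{C}_s$-verification to a reflection argument (a non-captured pair $E,C$ appears at some stage $\eta$, non-capture is absolute since the $a^{\alpha}_{\beta}$'s are all added by $P_0$, and the second bullet then kills the stationarity of $E$) is correct and matches what the theorem's hypotheses are designed for. But the proof has a genuine gap at its center: the strategic $\omega$-closure. The coordinates $\mathbb{C}(\omega_{1}\setminus(\tau_{\eta})_{G_{\eta}})$ with $(\tau_{\eta})_{G_{\eta}}$ stationary are \emph{not} strategically $\omega$-closed, not $\sigma$-closed, and not even proper (they destroy a stationary set), so there is no general iteration theorem to invoke, and your justification --- ``these shots never actually destroy stationarity of a set that survives to the end'' --- is circular: whether anything survives, indeed whether $\omega_1$ survives, is exactly what is at stake. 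The missing idea, which is the heart of the paper's proof, is that the master condition below an $(X,\bar{P})$-generic filter $g$ gets to \emph{choose} the new top entry $c_{X\cap\omega_{1}}=\langle a^{X\cap\omega_{1}}_{\beta}:\beta<\gamma_{X\cap\omega_{1}}\rangle$ of the guessing sequence (it is not determined by $g$), and one chooses each $a^{X\cap\omega_{1}}_{\beta}$ to be almost contained in every club of $X[g_{\eta_{\beta}}]$ and to meet every stationary set of every $X[g_{\eta_{\beta'}}]$, $\beta'\geq\beta$, infinitely. Applied to $\tau_{\eta}$ and the canonical name $\sigma\in X$ for a witnessing ``bad'' club, this yields the dichotomy: either $X\cap\omega_{1}\in(\tau_{\eta})_{G_{\eta}}$, in which case the pair is captured by an $a^{\alpha}_{\beta}$ with $\alpha\in(\tau_{\eta})_{G_{\eta}}$ and so $\undertilde{Q}_{\eta}$ is forced to be $\mathbb{C}(\omega_{1})$; or $X\cap\omega_{1}\notin(\tau_{\eta})_{G_{\eta}}$. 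Either way $X\cap\omega_{1}$ may be added to the generic club at coordinate $\eta$, and the master condition exists. Without this interplay between the $P_0$-coordinate of the master condition and the club-shooting coordinates, the closure claim is unsupported; note also that this same construction, not a separate argument, is what ultimately makes $\mathcal{C}_s$-capture happen often enough that the dangerous coordinates are rarely triggered.

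The $\Diamond(S)$ clause also cannot be discharged by citation. There is no theorem that a strategically $\omega$-closed forcing of size $\aleph_{2}$ over a model of CH forces $\Diamond(S)$ for all stationary $S$ (trivial forcing over a model of CH plus $\neg\Diamond$ is a counterexample, and $Add(\omega_1,1)$ does not sit densely inside an arbitrary such forcing). The paper instead argues directly: $(\omega,\infty)$-distributivity provides a matrix $\langle e^{\delta}_{\beta}:\delta,\beta<\omega_{1}\rangle$ in which every subset of every $\delta$ is listed uncountably often, and the generic club added by a single coordinate $\undertilde{Q}_{\alpha}$ is used to decode a guess $b_{\delta}=e^{\delta}_{\beta}$ from the least element of that club above $\delta$; genericity of the tail (via the same master-condition flexibility, steering the club's next element) shows $\langle b_{\delta}:\delta\in S\rangle$ is a $\Diamond(S)$-sequence. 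You should supply an argument of this kind rather than a remark.
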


\begin{proof} Let $X$ be a countable elementary submodel of
$H((2^{|\bar{P}|)^+})$ with $\bar{P} \in X$, let $g$ be an
$X$-generic filter contained in $\bar{P} \cap X$. Let $\gamma_{X
\cap \omega_{1}}$ be the ordertype of $X \cap \omega_{2}$, and for
each $\beta < \gamma_{X \cap \omega_{1}}$, let $\eta_{\beta}$ be the
$\beta$th member of $X \cap \omega_{2}$. For each $\beta < \gamma_{X
\cap \omega_{1}}$, let $a^{X \cap \omega_{1}}_{\beta}$ be a cofinal
subset of $X \cap \omega_{1}$ of ordertype $\omega$ such that,
letting $g_{\eta}$ denote the restriction of $g$ to $P_{\eta}$,
\begin{itemize}
\item for all $\beta' < \beta < \gamma_{X \cap \omega_{1}}$, $a^{X \cap \omega_{1}}_{\beta}
\setminus a^{X \cap \omega_{1}}_{\beta'}$ is finite;
\item $a^{\alpha}_{\beta}$ is eventually contained in every club
subset of $\omega_{1}$ in $X[g_{\eta_{\beta}}]$ and intersects
infinitely every stationary subset of $\omega_{1}$ in every
$X[g_{\eta_{\beta'}}]$, $\beta' \in [\beta, \gamma_{X \cap
\omega_{1}})$.
\end{itemize}


It remains to see that we can extend $g$ to a condition whose first
coordinate is given by adding $c_{X \cap \omega_{1}} = \langle
a^{\alpha}_{\beta} : \beta < \gamma_{X \cap \omega_{1}}\rangle$ to
the union of the first coordinates of the elements of $g$, and whose
$\eta$th coordinate, for each nonzero $\eta \in X \cap \omega_{2}$
is the condition given by the union of $\{ X \cap \omega_{1} \}$ and
the set of realizations of the $\eta$th coordinates of the members
of $g$.
We do this by induction on $\eta$,
letting $g'_{\eta}$ be our extended condition in $P_{\eta}$.


For each $\eta \in \omega_{2} \cap X$, there is a $P_{\eta}$-name
$\sigma \in X$ for a club subset of $\omega_{1}$ such that if, in
the $P_{\eta}$-extension $(\tau_{\eta})_{G_{\eta}}$ is stationary
and there exists a club $C$ such that $\tau_{G_{\eta}}, C$ is not
captured by any $a^{\alpha}_{\beta}$ with $\alpha \in
(\tau_{\eta})_{G_{\eta}}$, then $\sigma_{G_{\eta}}$ is such a $C$.
However, the realizations of $\tau_{\eta}$ and $\sigma$ by $g$ are
captured by $a^{X \cap \omega_{1}}_{o.t.(\eta \cap \omega_{2})}$, so
$g'_{\eta}$ forces that $\tau_{G_{\eta}},\sigma_{G_{\eta}}$ is
captured by $a^{X \cap \omega_{1}}_{o.t.(\eta \cap \omega_{2})}$. It
follows that $g'_{\eta}$ forces that either $\undertilde{Q}_{\eta}$
is $\mathbb{C}(\omega_{1})$, or $X \cap \omega_{1}$ is not in
$\tau_{G_{\eta}}$. In either case, the union of the members of $g
\cap \undertilde{Q}_{\eta}$ be can extended to a condition in
$\undertilde{Q}_{\eta}$ by adding $\{X \cap \omega_{1}\}$.

To see that $\Diamond(S)$ holds for every stationary $S \subset
\omega_{1}$ in the $\bar{P}$ extension, fix such an $S$ in the
$P_{\alpha}$ extension for some $\alpha < \omega_{2}$. Since
$\bar{P}$ is $(\omega, \infty)$ distributive, there exists in this
extension a set $\langle e^{\delta}_{\beta} : \delta, \beta <
\omega_{1} \rangle$ such that for every $\delta < \omega_{1}$ and
every $x \subset \delta$ there are uncountably many $\beta$ such
that $e^{\delta}_{\beta} = x$. Then, letting $T \in
\mathcal{P}(\omega_{1})^{V[G_{\alpha}]}$ be the set such that the
realization of $\undertilde{Q}_{\alpha}$ is $\mathbb{C}(T)$,
$\undertilde{Q}_{\alpha}$ adds a $\Diamond$ sequence $\langle
b_{\delta} : \delta \in S \rangle$ defined by letting $b_{\delta}$
be $e^{\delta}_{\beta}$, where the $\beta$th element of $T$ above
$\beta$ is the first element of the generic club for
$\undertilde{Q}_{\alpha}$ above $\delta$. To see that this is a
$\Diamond$ sequence, note that since $S$ is stationary in the
$\bar{P}$ extension, there are stationarily many elementary
submodels $X$ of any sufficiently large $H(\theta)^{V[G]}$ in this
extension with $X \cap \omega_{1} \in S$. Then $X \cap
(G/G_{\alpha})$ is a $(X \cap V[G_{\alpha}],
\bar{P}/P_{\alpha})$-generic filter which can be extended to a
condition in $\bar{P}/P_{\alpha}$ by adding $X \cap \omega_{1}$ to
each coordinate, and extended again to make any element of $T
\setminus ((X \cap \omega_{1}) + 1)$ the least element of the
generic club for $\undertilde{Q}_{\alpha}$ above $X \cap
\omega_{1}$. That $\langle b_{\beta} : \beta \in S \rangle$ is a
$\Diamond$ sequence then follows by genericity.
\end{proof}

Section \ref{indet} shows that proper forcing does not always
preserve the existence of a winning strategy for \spl.

\subsection{A strategy for \unspl}

In this section we show that it is consistent for \unspls to have a
winning strategy in $\mathcal{SG}$. We do this via the following
guessing principle.

\begin{df} Let $\mathcal{D}_{u}$ be the statement that there exists
a diamond sequence $\langle \sigma_{\alpha} : \alpha < \omega_{1}
\rangle$ such that for every $E \subset \omega_{1}$ there is a club
$C \subset \omega_{1}$ such that either $$\forall \alpha \in C ((E
\cap \alpha = \sigma_{\alpha}) \Rightarrow \alpha \in E)$$ or
$$\forall \alpha \in C ((E
\cap \alpha = \sigma_{\alpha}) \Rightarrow \alpha \not\in E).$$
\end{df}

\begin{thrm}\label{pos3} If $\mathcal{D}_{u}$ holds then \unspls has a winning
strategy in $\mathcal{SG}$.
\end{thrm}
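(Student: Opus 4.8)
The plan is to have \unspls use the diamond sequence $\langle \sigma_\alpha : \alpha < \omega_1 \rangle$ witnessing $\mathcal{D}_u$ to decide, at each limit ordinal $\alpha$, whether to accept based on the current partition already produced by \spl. Concretely: at a limit round $\alpha$, \unspls looks at the set $E_\alpha$ of ordinals accepted so far together with the partition $A_\alpha, B_\alpha$ chosen by \spl. The key observation is that $\mathcal{D}_u$ applies not just to $E$ but to \emph{any} subset of $\omega_1$; so I would first record, as a preliminary claim, that from one diamond sequence $\langle \sigma_\alpha \rangle$ witnessing $\mathcal{D}_u$ one can extract, for each subset $Y \subseteq \omega_1$, a club $C_Y$ on which membership in $Y$ is decided by the condition ``$Y \cap \alpha = \sigma_\alpha$'' in a uniform direction. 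Then \unspls plays so as to diagonalize against \spl's attempts to make both $A$ and $B$ stationary. The natural move: accept $\alpha$ at limit stages exactly when $E_\alpha \cap \alpha = \sigma_\alpha$ (and at successor stages and $0$, always reject, say), so that the final accepted set $E$ has the self-coherence property $E \cap \alpha = \sigma_\alpha \Rightarrow \alpha \in E$ automatically on a club — no, more carefully, one wants $E$ stationary and then wants to defeat every club $C$.

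The heart of the argument is the following. Suppose \spls has played against this strategy, producing $A$ and $B$. I want to show \unspls wins, i.e. $E$ is stationary and at least one of $A, B$ is nonstationary. First, $E$ is stationary: since $\langle \sigma_\alpha \rangle$ is a diamond sequence, the set of $\alpha$ with $E \cap \alpha = \sigma_\alpha$ is stationary, and by the strategy every such limit $\alpha$ lies in $E$. Second, apply $\mathcal{D}_u$ to, say, $A$: get a club $C$ on which either ``$A \cap \alpha = \sigma_\alpha \Rightarrow \alpha \in A$'' holds throughout, or ``$A \cap \alpha = \sigma_\alpha \Rightarrow \alpha \notin A$'' holds throughout. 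This alone is not enough, so the real strategy must be subtler: \unspls should, at round $\alpha$, consult $\sigma_\alpha$ to \emph{predict} the eventual value of $A$ below $\alpha$ (i.e. guess that $A \cap \alpha = \sigma_\alpha$) and accept $\alpha$ only when this guess holds, forcing a correlation between $E$ and $A$ that \spls cannot escape. The correct design, which I would pin down, is: accept $\alpha$ iff $\alpha$ is a limit and $A_\alpha = \sigma_\alpha$; then the final $A$ satisfies $A \cap \alpha = \sigma_\alpha$ for every $\alpha \in E$, since $A_\alpha = A \cap \alpha$ for $\alpha$ accepted (as \spls can only add $\alpha$ itself, not ordinals below it, at stage $\alpha$). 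Now by the diamond property the set of limit $\alpha$ with $A \cap \alpha = \sigma_\alpha$ is stationary, so $E$ is stationary; and applying the $\mathcal{D}_u$-dichotomy to $A$ yields a club $C$ such that on $C$ either every $\alpha$ with $A\cap\alpha = \sigma_\alpha$ is in $A$, or every such $\alpha$ is outside $A$. In the first case, every $\alpha \in E \cap C$ lies in $A$, so $B \cap C$ misses $E \cap C$; since $E$ is stationary... this needs the extra twist that $B \subseteq E$, giving $B \cap C = \emptyset$ on a further club restriction, hence $B$ nonstationary. In the second case, $E \cap C$ is disjoint from $A$, so $A \cap C$ is empty (as $A \subseteq E$), hence $A$ is nonstationary. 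Either way \spls loses.

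I would organize the write-up as: (1) state the strategy for \unspls explicitly (reject at $0$ and successors; at limit $\alpha$, accept iff $A_\alpha = \sigma_\alpha$); (2) observe $A_\alpha = A \cap \alpha$ and $B_\alpha = B\cap\alpha$ whenever $\alpha$ is accepted, and $A, B \subseteq E$ always; (3) argue $E$ is stationary using that $\langle \sigma_\alpha\rangle$ is a diamond sequence applied to the set $A$; (4) apply the $\mathcal{D}_u$-dichotomy to $A$ to get the club $C$, split into the two cases, and in each case intersect with a further club (where $E$ reflects correctly) to conclude that $A$ or $B$ is nonstationary, so no club witnesses a win for \spl. The main obstacle I anticipate is getting the bookkeeping in step (2) exactly right — specifically making sure that ``$A_\alpha = \sigma_\alpha$'' is a condition \unspls can actually evaluate at stage $\alpha$ (it is, since $A_\alpha$ and $\sigma_\alpha$ depend only on the run below $\alpha$), and that the implication chain in step (4) correctly uses $A \subseteq E$ and $B \subseteq E$ to pass from ``$E \cap C$ avoids $A$'' to ``$A \cap C = \emptyset$''; a careless version of the strategy (e.g. accepting based on $E_\alpha$ rather than $A_\alpha$) does not give \spls's partition enough to work with, so the choice to key off $A_\alpha$ is the crucial design decision.
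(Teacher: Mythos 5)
Your proposal is correct and, once the false starts are stripped away, is essentially the paper's own proof: \unspls accepts $\alpha$ exactly when $\sigma_{\alpha}=A_{\alpha}$, the diamond property applied to $A$ makes the accepted set stationary, and the $\mathcal{D}_{u}$-dichotomy for $A$ together with $A,B\subseteq E$ kills one of $A$, $B$ on a club. The design decision you flag as crucial (keying off $A_{\alpha}$ rather than $E_{\alpha}$) is precisely the one the paper makes.
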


\begin{proof} Let $\langle \sigma_{\alpha} : \alpha < \omega_{1} \rangle$
witness $\mathcal{D}_{u}$. Play for \unspls by accepting $\alpha$ if
and only if $\sigma_{\alpha} = A_{\alpha}$. At the end of the game,
the set of $\alpha$ such that $\sigma_{\alpha} = A_{\alpha}$ is
stationary, and there is a club $C$ such that either for all
$\alpha$ in $C$, if $\sigma_{\alpha} = A_{\alpha}$, then $\alpha$ is
in $A$, or for all $\alpha$ in $C$, if $\sigma_{\alpha} =
A_{\alpha}$, then $\alpha$ is in $B$. In either case, \spls has
lost.
\end{proof}

Our iteration to force $\mathcal{D}_{u}$ employs the same strategy
as the iteration for $\mathcal{C}_{s}$. We first force to add a
$\Diamond$-sequence $\langle \sigma_{\alpha} : \alpha < \omega_{1}
\rangle$ by initial segments, and
we then iterate to make this sequence witness $\mathcal{D}_{u}$,
iteratively forcing a club through the set of $\alpha < \omega_{1}$
such that $\sigma_{\alpha} \neq E \cap \alpha$ or $\alpha \in E$ for
each $E \subset \omega_{1}$ such that the sets $\{ \alpha \in E \mid
\sigma_{\alpha} = E \cap \alpha \}$ and $\{ \alpha \in \omega_{1}
\setminus E \mid \sigma_{\alpha} = E \cap \alpha \}$ are both
stationary.

More specifically, we have the following. Given a sequence $\Sigma =
\langle \sigma_{\alpha} : \alpha < \omega_{1} \rangle$ such that
each $\sigma_{\alpha}$ is a subset of $\alpha$, and given $E \subset
\omega_{1}$, let $A(\Sigma, E)$ be the set of $\alpha \in E$ such
that $\sigma_{\alpha} = E \cap \alpha$, and let $B(\Sigma, E)$ be
the set of $\alpha \in  \omega_{1} \setminus E$ such that
$\sigma_{\alpha} = E \cap \alpha$.

\begin{thrm}\label{duforce} Suppose that CH + $2^{\aleph_{1}} = \aleph_{2}$ holds,
and let $\bar{P}$ be a countable support iteration $\langle
P_{\alpha}, \undertilde{Q}_{\alpha} : \alpha < \omega_{2} \rangle$
such that $P_{0}$ is the partial order consisting of sequences
$\langle \sigma_{\beta} : \beta < \gamma\rangle$, for some countable
ordinal $\gamma$, such that each $\sigma_{\beta}$ is a subset of
$\beta$, ordered by extension. Let $\Sigma$ be the sequence added by
$P_{0}$ and suppose that the remainder of $\bar{P}$ satisfies the
following conditions.
\begin{itemize}
\item Each $\undertilde{Q}_{\alpha}$ is either $\mathbb{C}(\omega_{1})$
or $\mathbb{C}(\omega_{1} \setminus B(\Sigma, E))$ for some $E
\subset \omega_{1}$ such that $A(\Sigma, E)$ and $B(\Sigma, E)$ are
both stationary.
\item For every $E \subset \omega_{1}$ in any
$P_{\alpha}$-extension $(\alpha < \omega_{2})$ there is a $\gamma
\in [\alpha, \omega_{2})$ such that if $A(\Sigma, E)$ and $B(\Sigma,
E)$ are both stationary in the $P_{\gamma}$ extension, then
$\undertilde{Q}_{\gamma}$ is $\mathbb{C}(\omega_{1} \setminus
B(\Sigma, E))$.
\end{itemize}
Then $\bar{P}$ is strategically $\omega$-closed, and in the
$\bar{P}$-extension, $\mathcal{D}_{u}$ holds. Furthermore, in the
$\bar{P}$ extension, $\Diamond(S)$ holds for every stationary $S
\subset \omega_{1}$.
\end{thrm}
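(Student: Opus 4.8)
The plan is to follow the proof of Theorem \ref{csforce} as closely as possible, since the two iterations are built on the same pattern. Three things must be checked: that $\bar{P}$ is strategically $\omega$-closed, that the sequence $\Sigma$ added by $P_{0}$ witnesses $\mathcal{D}_{u}$ in $V[G]$, and that $\Diamond(S)$ holds for every stationary $S \subseteq \omega_{1}$. The last I would copy essentially verbatim from the end of the proof of Theorem \ref{csforce}: $\bar{P}$ is $(\omega,\infty)$-distributive, CH in $V$ supplies a sequence $\langle e^{\delta}_{\beta} : \delta,\beta < \omega_{1}\rangle$ attaining every $x \subseteq \delta$ as $e^{\delta}_{\beta}$ for uncountably many $\beta$, and at the stage $\alpha$ at which a given stationary $S$ first appears, the generic club added by $\undertilde{Q}_{\alpha}$ reads off a $\Diamond(S)$-sequence by genericity over elementary submodels $X$ with $X \cap \omega_{1} \in S$.

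For strategic $\omega$-closure I would invoke the elementary-submodel characterization recalled just before Theorem \ref{csforce}: given a countable $X \prec H((2^{|\bar{P}|})^{+})$ with $\bar{P} \in X$ and an $(X,\bar{P})$-generic filter $g \subseteq \bar{P} \cap X$, one must produce a condition $g' \leq g$ in $\bar{P}$. Writing $\delta = X \cap \omega_{1}$, the first coordinates of the members of $g$ amalgamate to $\langle \sigma_{\beta} : \beta < \delta\rangle$, and the one genuinely new point relative to Theorem \ref{csforce} is that one must now choose a single set $\sigma_{\delta} \subseteq \delta$ --- rather than a whole tower $\langle a^{\delta}_{\beta} : \beta < o.t.(X \cap \omega_{2})\rangle$, one member for each later coordinate --- so that every coordinate $\eta \in X \cap \omega_{2}$ can be extended by adjoining $\delta$. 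That one set suffices is the crux of the argument. By elementarity each $\undertilde{Q}_{\eta}$ is, inside $X$, a name for $\mathbb{C}(\omega_{1})$ or for $\mathbb{C}(\omega_{1} \setminus B(\Sigma, \dot{E}_{\eta}))$ with some $\dot{E}_{\eta} \in X$; since $g \cap P_{\eta}$ is $(X,P_{\eta})$-generic it decides ``$\xi \in \dot{E}_{\eta}$'' for every $\xi < \delta$, so $e_{\eta} := \dot{E}_{\eta} \cap \delta$ is a well-determined subset of $\delta$, independent of the generic extending $g \cap P_{\eta}$. Now the $\eta$th coordinates of the members of $g$ have union $c_{\eta}$ cofinal in $\delta$ (by genericity), so $c_{\eta} \cup \{\delta\}$ is a closed subset of $\omega_{1}$, and it is a common extension of those coordinates in $\undertilde{Q}_{\eta}$ whichever alternative $\undertilde{Q}_{\eta}$ realizes to: trivially when $\undertilde{Q}_{\eta} = \mathbb{C}(\omega_{1})$, and because $\delta \notin B(\Sigma, \dot{E}_{\eta})$ when $\undertilde{Q}_{\eta} = \mathbb{C}(\omega_{1} \setminus B(\Sigma, \dot{E}_{\eta}))$, this last holding as soon as $\sigma_{\delta} \neq e_{\eta}$. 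As $X \cap \omega_{2}$ is countable while $\delta$ has uncountably many subsets, I can fix $\sigma_{\delta}$ distinct from every $e_{\eta}$ simultaneously, and then run the obvious recursion on $\eta \in X \cap \omega_{2}$; limit stages present no difficulty, since the support of the assembled condition stays inside the countable set $X \cap \omega_{2}$ (see \cite{Shf}), and the resulting $g'$ is below $g$.

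For $\mathcal{D}_{u}$, let $\Sigma = \langle \sigma_{\alpha} : \alpha < \omega_{1}\rangle$ be the sequence added by $P_{0}$. First, $\Sigma$ is a $\Diamond$-sequence in $V[G]$: $P_{0}$ is the standard forcing adding a $\Diamond$-sequence by countable approximations, and the only stationary sets the rest of the iteration destroys are of the form $B(\Sigma, E')$, each killed only at a stage at which $A(\Sigma, E')$ is stationary (by the first hypothesis); since such a club-shooting preserves the stationarity of every set disjoint from $B(\Sigma, E')$, and since $A(\Sigma, E) \cup B(\Sigma, E) \subseteq B(\Sigma, E')$ would force $E' = E$ and $A(\Sigma, E) = \emptyset$ (impossible here), the guessing set $A(\Sigma, E) \cup B(\Sigma, E) = \{\alpha : \sigma_{\alpha} = E \cap \alpha\}$ of any $E$ survives. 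Second, fix $E \subseteq \omega_{1}$ in $V[G]$. By the $\aleph_{2}$-chain condition of $\bar{P}$ --- which holds, as usual for these iterations, from CH, preserved since $\bar{P}$ adds no reals --- $E \in V[G_{\alpha}]$ for some $\alpha < \omega_{2}$, so the second hypothesis gives $\gamma \in [\alpha, \omega_{2})$ at which $\undertilde{Q}_{\gamma}$ is forced to be $\mathbb{C}(\omega_{1} \setminus B(\Sigma, E))$ whenever $A(\Sigma, E)$ and $B(\Sigma, E)$ are both stationary in $V[G_{\gamma}]$. Were both stationary in $V[G]$, they would be stationary in $V[G_{\gamma}]$ (stationarity passes down to inner models), so $\undertilde{Q}_{\gamma}$ would shoot a club off $B(\Sigma, E)$, contradicting its stationarity in $V[G]$. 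Hence one of $A(\Sigma, E)$, $B(\Sigma, E)$ is nonstationary in $V[G]$, and a club disjoint from it is precisely the club required by the definition of $\mathcal{D}_{u}$ (one missing $B(\Sigma, E)$ gives the first alternative, one missing $A(\Sigma, E)$ the second).

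The main obstacle I anticipate is the strategic-closure argument, and within it exactly the point isolated above: that the demand made by coordinate $\eta$ collapses to the single inequality $\sigma_{\delta} \neq e_{\eta}$, with $e_{\eta}$ already pinned down by $g$, so that one set $\sigma_{\delta}$ can be chosen to meet all these demands at once by a bare counting argument. A secondary, routine matter --- needed both there (cofinality of $c_{\eta}$, $(\omega,\infty)$-distributivity of the $\mathbb{C}(\cdot)$, and their preserving stationary sets off the killed set) and for the survival of the guessing sets above --- is that the club-shooting iterands are of the standard, well-behaved kind; this is underwritten by the first hypothesis, which keeps $\omega_{1} \setminus B(\Sigma, E)$ stationary whenever such a forcing is invoked, and I would settle it by the usual theory of club shooting.
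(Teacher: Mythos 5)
Your overall route is the paper's: the strategic-closure argument via a single diagonalized $\sigma_{X\cap\omega_1}$ is exactly what the paper does (and your elaboration of it is correct), the ``not both stationary'' argument from the second bullet is right, and the $\Diamond(S)$ part is indeed copied verbatim in the paper. But the part you treat as routine --- that $\Sigma$ still guesses every $E\in V[G]$, i.e.\ that $A(\Sigma,E)\cup B(\Sigma,E)$ is stationary in $V[G]$ --- is where the paper does most of its work (its Claims 1--3), and your argument for it has a genuine gap. First, ``$P_0$ is the standard diamond forcing'' only handles sets $E$ lying in $V[G_0]$; a set $E$ first appearing in $V[G_\alpha]$ for $0<\alpha<\omega_2$ has nothing to ``survive'' from --- the stationarity of its guessing set must be \emph{established} at stage $\alpha$. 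The paper's Claim 2 does this by taking $X$ containing the name $\tau$ for $E$ and setting $\sigma_{X\cap\omega_1}$ \emph{equal} to $\tau_g\cap(X\cap\omega_1)$ (the opposite move from your closure argument), using that the trace of the new set on $X\cap\omega_1$ differs from the traces of all older names in $X$, so that $X\cap\omega_1$ lands in no $B(\Sigma,\rho_G)$ for older $\rho$ and can still be added to all the earlier clubs. Your proposal contains no version of this step.

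Second, your preservation argument does not go through as stated. The criterion you check --- that the guessing set is not \emph{contained} in any $B(\Sigma,E')$ --- is not the relevant one (a club through $\omega_1\setminus T$ kills any set whose intersection with $T$ is stationary, not only subsets of $T$), and it is not even consistent with the criterion you announce (``disjoint from $B(\Sigma,E')$''), which the guessing set does not satisfy: for $E'=E$ it \emph{contains} $B(\Sigma,E)$, which really is killed, and what must be shown is that the complementary piece $A(\Sigma,E)$ persists. More importantly, single-step preservation cannot simply be iterated: the iterands $\mathbb{C}(\omega_1\setminus B(\Sigma,E'))$ are not proper over the intermediate models (each destroys a stationary set), so the tail $\bar P/P_\alpha$ does not preserve stationary sets wholesale, and ``the usual theory of club shooting'' does not hand you persistence of $A(\Sigma,E)$ through an $\omega_2$-length countable-support iteration of such posets. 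The paper's Claims 1 and 3 supply the missing argument: for $X\prec H(\theta)^{V[G_\alpha]}$ with $X\cap\omega_1\in A(\Sigma,E)$ (resp.\ $B(\Sigma,E)$, when $A(\Sigma,E)$ is nonstationary) and the relevant parameters in $X$, one has $X\cap\omega_1\notin B(\Sigma,F)$ for every $F$ named in $X$, so any $(X,\bar P/P_\alpha)$-generic filter extends to a master condition by adding $X\cap\omega_1$ to every club coordinate. This is the same mechanism as your closure argument, but run over $V[G_\alpha]$ with $X\cap\omega_1$ constrained to lie in the set whose stationarity is being preserved; it needs to be stated and proved, not absorbed into a remark.
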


\begin{proof} The iteration $\bar{P}$ is clearly strategically $\omega$-closed,
since for any countable $X \prec
H((2^{|\bar{P}|})^{+})$ and any $(X, \bar{P})$-generic filter $g$
contained in $X$, one can extend $g$ to a condition by making
$\sigma_{X \cap \omega_{1}}$ unequal to the realization by $g$ of
any name in $X$ for a subset of $\omega_{1}$, and adding $X \cap
\omega_{1}$ to all the clubs being added by the
$\undertilde{Q}_{\alpha}$'s, $\alpha \in X \cap \omega_{2}$. It is
clear also that in the $\bar{P}$-extension there is no $E \subset
\omega_{1}$ such that $A(\Sigma, E)$ and $B(\Sigma, E)$ are both
stationary.

To see that at least one of $A(\Sigma,E)$ and $B(\Sigma, E)$ is
stationary for each $E \subset \omega_{1}$, we first note the
following.

\begin{claim}[1] Suppose that $E \subset \omega_{1}$ is a member
of the $P_{\alpha}$ extension, for some $\alpha < \omega_{2}$, and
$A(\Sigma, E)$ is stationary in this extension. Then $A(\Sigma, E)$
remains stationary in the $\bar{P}$ extension.
\end{claim}

Note that $A(\Sigma, E)$ has countable intersection with $B(\Sigma,
F)$, for every $F \subset \omega_{1}$. Fix $X \prec
H(((2^{|\bar{P}|})^{+})^{V})^{V[G_{\alpha}]}$ (where $G_{\alpha}$ is
the restriction of the generic filter $G$ to $P_{\alpha}$) with $X
\cap \omega_{1} \in A(\Sigma, E)$ and $A(\Sigma, E) \in X$. Then any
$(X,\bar{P}/P_{\alpha})$-generic filter contained in $X$ can be
extended to a condition by adding $X \cap \omega_{1}$ to the clubs
being added at every stage of $\bar{P}$ after the first.

Similar reasoning shows the following two facts, which complete the
proof that $\Sigma$ witnesses $\mathcal{D}_{u}$ in the $\bar{P}$
extension.

\begin{claim}[2] Suppose that $E\subset \omega_{1}$ is a member
of the $P_{\alpha}$ extension, for some $\alpha < \omega_{2}$, and
not a member of the $P_{\gamma}$ extension, for any $\gamma <
\alpha$. Then $A(\Sigma, E) \cup B(\Sigma, E)$ is stationary in the
$P_{\alpha}$ extension.
\end{claim}

To see Claim 2, let $\tau$ be a $P_{\alpha}$-name for a subset of
$\omega_{1}$ which is forced to be unequal to any such subset in any
$P_{\gamma}$ extension, for any $\gamma < \alpha$. Fix $X \prec
H(((2^{|\bar{P}|})^{+}))^{V}$  with $\tau \in X$. Let $g$ be an
$(X,P_{\alpha})$-generic filter, and note that the realization of
$\tau \restrict (X \cap \omega_{1})$ by $g$ is different from the
realizations of $\rho \restrict (X \cap \omega_{1})$ by $g$ for any
$P_{\gamma}$-name $\rho \in X$ for a subset of $\omega_{1}$, for any
$\gamma \in X \cap \alpha$. It follows that adding the realization
of $\tau \restrict (X \cap \omega_{1})$ by $g$ to the union of the
first coordinate projection of $g$ gives a condition in $P_{0}$
forcing that $X \cap \omega_{1}$ is not in any $\Sigma(B,
\rho_{G_{\gamma}})$, for any for any $P_{\gamma}$-name $\rho \in X$
for a subset of $\omega_{1}$, for any $\gamma \in X \cap \alpha$.
Therefore, we can add $X \cap \omega_{1}$ to the clubs being added
in every other stage of $\bar{P}$ in $X \cap \alpha$, and get a
condition extending every condition in $g$.

\begin{claim}[3] Suppose that $E \subset \omega_{1}$ is a member
of the $P_{\alpha}$ extension, for some $\alpha < \omega_{2}$, and
$A(\Sigma, E)$ is nonstationary in this extension. Then $B(\Sigma,
E)$ remains stationary in the $\bar{P}$ extension.
\end{claim}

This is similar to the previous claims, noting that every subsequent
stage of $\bar{P}$ forces a club though the complement of a set with
countable intersection with $B(\Sigma, E)$.


The proof that $\Diamond(S)$ holds for every stationary $S \subset
\omega_{1}$ in the $\bar{P}$ extension is (literally) the same as in
the proof of Theorem \ref{csforce}.
\end{proof}

Note that that the iterations $\bar{P}$ in Theorems \ref{csforce}
and \ref{duforce} are strategically $\omega$-closed.

\subsection{$\Sigma^{2}_{2}$ maximality}

The statements that \spls and \unspls have winning strategies in
$\mathcal{SG}$ are each $\Sigma^{2}_{2}$ in a predicate for
$NS_{\omega_{1}}$, and they are obviously not consistent with each
other. Woodin (see \cite{KLZ}) has shown that if there is a proper
class of measurable Woodin cardinals, then there exists in a forcing
extension a transitive class model of ZFC satisfying all
$\Sigma^{2}_{2}$ sentences $\phi$ such that $\phi$ + CH can be
forced over the ground model. The results here show that this result
cannot be extended to include a predicate for $NS_{\omega_{1}}$.
This was known already, in that $\Diamond^{*}$ (in the sense of
\cite{K}) and ``the restriction of $NS_{\omega_{1}}$ to some
stationary set is $\aleph_{1}$ dense" were both known to be
consistent with $\Diamond$ (the second of these is due to Woodin,
uses large cardinals and is unpublished, though a related proof,
also due to Woodin, appears in \cite{F}). Our example is simpler and
doesn't use large cardinals; it also gives (we believe, for the
first time) a counterexample consisting of two sentences each
consistent with ``$\Diamond(S)$ holds for every stationary set $S
\subset \omega_{1}$."

\subsection{A determined variation}

There are many natural variations of $\mathcal{SG}$. We show that
one such variation is determined.

\begin{thrm} Let $\mathcal{G}$ be the following game of length
$\omega_{1}$. In round $\alpha$, player $I$ puts $\alpha$ into one
of two sets $E_{0}$ and $E_{1}$, and player $II$ puts $\alpha$ into
one of two sets $A_{0}$ and $A_{1}$. After all $\omega_{1}$ rounds
have been played, $II$ wins if one of the following pairs of set are
both stationary.
\begin{itemize}
\item $E_{0} \cap A_{0}$ and $E_{0} \cap A_{1}$
\item $E_{1} \cap A_{0}$ and $E_{1} \cap A_{1}$
\end{itemize}
Then $II$ has a winning strategy in $\mathcal{G}$.
\end{thrm}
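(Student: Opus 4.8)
The plan is to have player $II$ split according to a fixed $\Diamond$-like bookkeeping of player $I$'s moves, using the fact that $I$ has committed all of $\omega_1$ to the partition $E_0 \cup E_1$, so at least one of $E_0$, $E_1$ is stationary. The key observation is that, unlike in $\mathcal{SG}$, here $II$ does not need to anticipate \emph{which} of $E_0,E_1$ will end up stationary; $II$ can try to split $E_0$ correctly along one "channel" of a $\Diamond$-sequence and simultaneously split $E_1$ correctly along another, because the two tasks concern disjoint sets of rounds. So the first step is to note that $\Diamond$ holds (we are in ZFC here, but $\mathcal{G}$ is a game in $V$, so we may pass to a generic extension by $\mathrm{Add}(\omega_1,1)$ which is $\omega$-closed, adds no reals, and forces $\Diamond$; a winning strategy for $II$ in the extension restricts to one in $V$ since the game and its payoff are absolute between $V$ and this extension — actually one must be slightly careful, but since the forcing is $\omega_1$-strategically closed and $II$ can simulate it, this is standard). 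Alternatively, and more cleanly, I would reduce to a combinatorial principle that is an outright theorem of ZFC.

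In more detail, fix in the generic extension a $\Diamond$-sequence $\langle s_\alpha : \alpha < \omega_1\rangle$ guessing pairs $(x,y)$ of subsets of $\omega_1$, i.e. for every $X,Y \subseteq \omega_1$ the set $\{\alpha : X\cap\alpha = (s_\alpha)_0,\ Y\cap\alpha = (s_\alpha)_1\}$ is stationary (this follows from ordinary $\Diamond$ by coding). The strategy for $II$ is: at round $\alpha$, having watched $I$ build $E_0 \restriction \alpha$ and $E_1\restriction\alpha$ and knowing into which of $E_0,E_1$ player $I$ just put $\alpha$, consult $s_\alpha$. Think of $(s_\alpha)_0$ as a "guess for $E_0 \cap A_0$" and $(s_\alpha)_1$ as a "guess for $E_1\cap A_0$". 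If $\alpha\in E_0$, put $\alpha$ into $A_0$ iff $\alpha\in(s_\alpha)_0$; if $\alpha\in E_1$, put $\alpha$ into $A_0$ iff $\alpha\in(s_\alpha)_1$ (and into $A_1$ otherwise). Now run the game to the end and suppose without loss of generality that $E_0$ is stationary. Let $Z = E_0\cap A_0$; I claim both $Z$ and $E_0\setminus Z = E_0\cap A_1$ are stationary. Given a club $C$, by the diamond property applied to the pair $(Z, \emptyset)$ (or rather to $(E_0\cap A_0, E_1\cap A_0)$, the actual sets produced) there is $\alpha\in C$ with $E_0 = E_0$ guessed correctly up to $\alpha$ \emph{and} with $\alpha$ behaving as dictated; the point is that at any such guessing point $\alpha$ that also lies in $E_0$, the strategy put $\alpha\in A_0$ iff $\alpha\in(s_\alpha)_0 = Z\cap\alpha$'s "intended" value — so by choosing the guessed pair to be exactly $(Z, W)$ where $W = E_1\cap A_0$, we get stationarily many $\alpha\in E_0\cap C$ landing in $A_0$ and (using instead the guess for the complement) stationarily many landing in $A_1$.

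The one genuine subtlety — and the step I expect to be the main obstacle — is that the sets $Z = E_0\cap A_0$ and its complement within $E_0$ are not known in advance: they depend on $I$'s play, which in turn can depend on $II$'s strategy, so there is a potential circularity in "applying $\Diamond$ to $(Z,W)$". The resolution is the usual one for $\Diamond$-driven strategies: $Z$ and $W$ \emph{are} well-defined subsets of $\omega_1$ once the run is over, so the $\Diamond$-sequence, being fixed in advance, guesses them on a stationary set regardless; and at a guessing point $\alpha$ (where $Z\cap\alpha = (s_\alpha)_0$), the strategy's decision at $\alpha$ depends only on $E_0\restriction\alpha$, $E_1\restriction\alpha$, on which of $E_0,E_1$ contains $\alpha$, and on $s_\alpha$ — all data available at stage $\alpha$ — so the decision is forced to be "the right one" relative to the guess. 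Thus on a stationary set of $\alpha\in E_0$ we get $\alpha\in A_0$, and by separately feeding $\Diamond$ the pair whose first coordinate is instead $(E_0\setminus A_0)\cap\alpha = (E_0\cap A_1)\cap\alpha$ — handled by the same sequence since we may take $s_\alpha$ to guess all relevant sets simultaneously — we get a stationary set of $\alpha\in E_0\cap A_1$ as well. Finally I would remark that this argument takes place in the $\Diamond$-extension but that having a winning strategy for $II$ there yields one in $V$: because the extension is by a strategically (even countably) closed forcing, $II$ may as a side computation play the forcing game to produce a sufficiently generic filter and read off the strategy; more simply, one checks that the statement "$II$ has a winning strategy in $\mathcal{G}$" is $\Sigma^2_1$ and is preserved downward from the extension, or one simply observes $\mathcal{G}$ is determined and $I$ provably has no winning strategy (an easy direct argument: $II$ can use any $\Diamond$-sequence available after forcing to defeat any fixed strategy of $I$), whence $II$ wins. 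I would write the proof in whichever of these framings is cleanest, most likely the explicit $\Diamond$-strategy with a one-line remark handling the passage back to $V$.
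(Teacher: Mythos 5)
There is a genuine gap --- in fact two independent ones --- and the approach is much heavier than what the theorem needs. First, the reduction to $\Diamond$ does not close. $\Diamond$ is not a theorem of ZFC, and none of the offered routes back from the $\mathrm{Add}(\omega_1,1)$-extension to $V$ works: a strategy built from the generic $\Diamond$-sequence is a new object of size $\aleph_1$ and does not ``restrict to'' a strategy in $V$; the statement ``$II$ has a winning strategy'' (a $\Sigma^2_1$-type assertion about length-$\omega_1$ games) is not downward absolute in general; and the fallback ``$\mathcal{G}$ is determined and $I$ has no winning strategy, hence $II$ wins'' assumes determinacy of a length-$\omega_1$ game, which is precisely the kind of assertion this paper shows can fail (for $\mathcal{SG}$). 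The idea of simulating the forcing ``as a side computation'' during a single run only produces a filter meeting $\aleph_1$ many dense sets, and verifying that the resulting sequence guesses the sets $E_0\cap A_0$, $E_1\cap A_0$ built during that very run is exactly the kind of statement that the paper needs an $\omega_2$-length iteration (Theorem \ref{duforce}) to arrange; it is not automatic. Second, the concrete strategy is broken as written: since $(s_\alpha)_0, (s_\alpha)_1 \subseteq \alpha$, the test ``$\alpha\in(s_\alpha)_i$'' is always false, so your strategy puts every $\alpha$ into $A_1$ and $A_0=\emptyset$. Repairing the trigger does not obviously help, because the set of $\alpha$ at which a fixed $\Diamond$-sequence guesses a given pair correctly is merely stationary, and its intersection with $E_0$ (a set the opponent controls) need not be stationary; the circularity you flag as ``the usual one'' is therefore not resolved by the usual remark.

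The theorem actually has a short ZFC proof with no guessing principle at all, and this is what the paper does: fix four pairwise disjoint stationary sets $B_{00}, B_{01}, B_{10}, B_{11}$ \emph{in advance}, and at round $\alpha\in B_{ij}$ have $II$ put $\alpha$ into $A_i$ if $I$ put $\alpha$ into $E_0$, and into $A_j$ otherwise. Then $B_{ij}\cap E_0\subseteq A_i$ and $B_{ij}\cap E_1\subseteq A_j$, so if some $A_i\cap E_0$ is nonstationary, both $B_{i0}$ and $B_{i1}$ are contained in $E_1$ modulo $NS_{\omega_1}$, whence $E_1\cap A_0\supseteq B_{i0}\cap E_1$ and $E_1\cap A_1\supseteq B_{i1}\cap E_1$ are both stationary; symmetrically if some $A_i\cap E_1$ is nonstationary; and otherwise $E_0\cap A_0$ and $E_0\cap A_1$ are already both stationary. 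The moral is that $II$ does not need to guess anything about the future of the run; a fixed four-fold stationary partition already hedges against every outcome.
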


\begin{proof}
Let $B_{00}$, $B_{01}$, $B_{10}$ and $B_{11}$ be pairwise disjoint
stationary subsets of $\omega_{1}$.
In round $\alpha$, if $\alpha$ is in $B_{ij}$, let $II$ put $\alpha$
in $A_{i}$ if $I$ put $\alpha$ in $E_{0}$ and in $A_{j}$ otherwise.
Then after all $\omega_{1}$ many rounds have been played, suppose
that $A_{i} \cap E_{0}$ is nonstationary. Then $B_{i0}$ and $B_{i1}$
are both contained in $E_{1}$ modulo $NS_{\omega_{1}}$, which means
that $E_{1} \cap A_{0}$ and $E_{1} \cap A_{1}$ are both stationary.
Similarly, if $A_{i} \cap E_{1}$ is nonstationary then $B_{0i}$ and
$B_{1i}$ are both contained in $E_{0}$ modulo $NS_{\omega_{1}}$,
which means that $E_{0} \cap A_{0}$ and $E_{0} \cap A_{1}$ are both
stationary.
\end{proof}


\section{Indeterminacy from forcing axioms}\label{indet}

The axiom PFA$^{+2}$ says that whenever $P$ is a proper partial
order, $D_{\alpha}$ $(\alpha < \omega_{1})$ are dense subsets of $P$
and $\sigma_{1}$, $\sigma_{2}$ are $P$-names for stationary subsets
of $\omega_{1}$, there is a filter $G \subset P$ such that $G \cap
D_{\alpha} \neq \emptyset$ for each $\alpha < \omega_{1}$, and such
that $\{ \alpha < \omega_{1} \mid \exists p \in G\text{ }p \forces
\check{\alpha} \in \sigma_{i}\}$ is stationary for each $i \in
\{1,2\}$. Theorems \ref{csforce} and \ref{duforce} together show
that PFA$^{+2}$ implies the indeterminacy of $\mathcal{SG}$.
Furthermore, a straightforward argument shows that the following
statement implies the nonexistence of a winning strategy for \unspls
in $\mathcal{SG}$, where $Add(1, \omega_{1})$ is the partial order
that adds a subset of $\omega_{1}$ by initial segments : for any
pair $\sigma_{1}$,$\sigma_{2}$ of $Add(1, \omega_{1})$-names for
stationary subsets of $\omega_{1}$, there is a filter $G \subset
Add(1, \omega_{1})$ realizing both $\sigma_{1}$ and $\sigma_{2}$ as
stationary sets. This statement is trivially subsumed by PFA$^{+2}$,
but also holds in the collapse of a sufficiently large cardinal to
be $\omega_{2}$, and thus is consistent with CH.

The axiom Martin's Maximum \cite{FMS} says that whenever $P$ is a
partial order such that forcing with $P$ preserves stationary
subsets of $\omega_{1}$ and $D_{\alpha}$ $(\alpha < \omega_{1})$ are
dense subsets of $P$, there is a filter $G \subset P$ such that $G
\cap D_{\alpha} \neq \emptyset$ for each $\alpha < \omega_{1}$.

\begin{thrm}\label{mmundet} Martin's Maximum implies that $\mathcal{SG}$ is
undetermined.
\end{thrm}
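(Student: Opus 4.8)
The plan is to obtain the conclusion from PFA$^{+2}$. Since a proper partial order preserves stationary subsets of $\omega_1$, the axiom PFA$^{+2}$ is implied by the strengthening of Martin's Maximum in which two names for stationary subsets of $\omega_1$ are allowed, and this strengthening (indeed the version with countably many such names) is a known consequence of MM; so MM implies PFA$^{+2}$, and it is enough to show that PFA$^{+2}$ implies the indeterminacy of $\mathcal{SG}$. Half of this is the observation made earlier in this section: the statement that for every pair $\sigma_1,\sigma_2$ of $Add(1,\omega_1)$-names for stationary subsets of $\omega_1$ there is a filter $G\subseteq Add(1,\omega_1)$ realizing both as stationary is an immediate instance of PFA$^{+2}$ (as $Add(1,\omega_1)$ is $\sigma$-closed, hence proper), and it was noted to imply that \unspl has no winning strategy. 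So it remains to show that PFA$^{+2}$ rules out a winning strategy for \spl; the idea is to apply PFA$^{+2}$ to the iteration of Theorem~\ref{duforce} in much the way the \unspl case uses $Add(1,\omega_1)$, exploiting that in the model of $\mathcal{D}_u$ this iteration produces, \unspl has a winning strategy (Theorem~\ref{pos3}) and hence \spl has none.

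In detail: let $s$ be any strategy for \spl, and let $\bar P=\langle P_\alpha,\undertilde{Q}_\alpha:\alpha<\omega_2\rangle$ be the countable support iteration of Theorem~\ref{duforce}, so that $P_0$ adds a $\Diamond$-sequence $\Sigma=\langle\sigma_\alpha:\alpha<\omega_1\rangle$ by initial segments and the later stages shoot clubs making $\Sigma$ witness $\mathcal{D}_u$; since $\bar P$ is strategically $\omega$-closed it is proper, so PFA$^{+2}$ applies to it. Consider the play of $s$ against the strategy ``accept $\alpha$ if and only if $\sigma_\alpha=A_\alpha$'': it is computed continuously from $s$ and $\Sigma$, and if $A,B$ are the two sides $s$ builds in it then (as in the proof of Theorem~\ref{pos3}) $A=A(\Sigma,A)$, $B=B(\Sigma,A)$, and the accepted set is $A\cup B$. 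By the argument of Theorem~\ref{duforce} (Claims~1--3 of its proof), in the $\bar P$-extension the accepted set $A\cup B$ is stationary, and $\mathcal{D}_u$ supplies, for $E=A$, a club $C$ --- the one shot by $\undertilde{Q}_\rho$ for the stage $\rho$ handling $E=A$, namely $\mathbb{C}(\omega_1\setminus B)$ when $A$ and $B$ are both stationary, and an already-present club otherwise --- disjoint from whichever of $A,B$ is nonstationary; so $s$ loses there. I would pull this back to $V$ by applying PFA$^{+2}$ to $\bar P$ with $\aleph_1$-many dense sets that decide $\Sigma$ in full, make $C$ unbounded with its membership below each countable ordinal decided (so that its realization is a club), and decide which of $A,B$ the club $C$ avoids, taking the two stationary names to be $A\cup B$ and $C$. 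The resulting filter yields, in $V$, a sequence $\Sigma$ and a club $C$ for which the play of ``accept $\alpha$ iff $\sigma_\alpha=A_\alpha$'' against $s$ has stationary accepted set while $C$ is disjoint from one of $A,B$; by the computation in Theorem~\ref{pos3} this play is a loss for \spl, so $s$ is not winning.

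The crux is that last step: one has to choose the $\aleph_1$-many dense subsets of $\bar P$ and the two stationary names so that the filter PFA$^{+2}$ provides carries out exactly the one instance of the $\mathcal{D}_u$-construction indexed by $E=A=A(\Sigma,A)$ --- a set not determined until $\Sigma$ is --- even though $\bar P$ has length $\omega_2$ and performs the construction for every subset of $\omega_1$. What makes this feasible is that $E\restrict\alpha$ is determined by $\Sigma\restrict\alpha$ and $s$, so the stage $\rho$ lies inside a suitable initial elementary submodel and the requirements become $\aleph_1$-many dense sets; carrying this out, and verifying that Claims~1--3 of Theorem~\ref{duforce} deliver the stationarity claims in $V$ and not merely in the $\bar P$-extension, is where the real work lies. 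The same method applied to the iteration of Theorem~\ref{csforce} and the strategy of Theorem~\ref{pos2} beats a putative winning strategy for \unspl, making the two cases symmetric; either this or the $Add(1,\omega_1)$ remark disposes of \unspl. Together with Theorems~\ref{csforce}, \ref{duforce}, \ref{pos2} and~\ref{pos3}, this shows $\mathcal{SG}$ is undetermined under Martin's Maximum.
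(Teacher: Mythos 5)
Your proposal founders on its very first step. You reduce everything to PFA$^{+2}$ by asserting that the ``$+2$'' strengthening of Martin's Maximum ``is a known consequence of MM,'' so that MM implies PFA$^{+2}$. That is not a theorem: it is not known that MM implies MM$^{+}$ (or PFA$^{+}$), let alone the $+2$ versions, and this has long been a recognized open problem. The paper itself signals this: the remark preceding the theorem already notes, in one sentence, that Theorems \ref{csforce} and \ref{duforce} show PFA$^{+2}$ implies the indeterminacy of $\mathcal{SG}$, and the entire point of Theorem \ref{mmundet} is to get the conclusion from MM \emph{alone}, for which a genuinely different argument is needed. Since every subsequent step of your proposal is an application of PFA$^{+2}$, the gap is fatal to the whole plan, not just to one half of it. (Your remaining sketch also leaves the hard part --- isolating $\aleph_1$ many dense sets that force the stage-$\rho$ club construction for the not-yet-determined set $E=A(\Sigma,A)$, and transferring Claims 1--3 back to $V$ --- explicitly undone, but that is secondary.)

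For contrast, the paper's argument never leaves plain MM. Given a strategy for \unspl, it generically adds a run of the game in which \unspls plays by the strategy and splits into cases according to whether the complement of the accepted set $E$ meets every ground-model stationary set stationarily: if so, one kills the stationarity of $E$ by a two-step stationary-preserving forcing and applies MM; if not, one finds a condition $p$ and a club name $\tau$ with $p \forces F \cap \tau_{G} \subset E$ and then constructs, entirely in the ground model, a run extending $p$ in which \spls uses two disjoint stationary subsets of $F$ to defeat the strategy. Given a strategy for \spl, it generically adds a regressive function $f$, considers the runs on the fibers $E^{\alpha}=f^{-1}(\alpha)$ with responses $A^{\alpha},B^{\alpha}$, and again splits into cases, either building a defeating filter by hand in the ground model or applying MM to a two-step stationary-preserving forcing that shoots a club through the appropriate set. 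If you want to salvage your approach you must either prove the MM-to-PFA$^{+2}$ implication (do not expect to) or replace it with case analyses of this kind that use only stationary-set-preserving forcings and plain MM.
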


\begin{proof}
Fix a strategy $\Sigma$ for $\unspls$ in $\mathcal{SG}$, and let
$E$, $A$, and $B$ be the result of a generic run of $\mathcal{SG}$
where $\unspls$ plays by $\Sigma$ (the partial order consists of
countable partial plays where $\unspls$ plays by $\Sigma$, ordered
by extension). If the complement of $E$ has stationary intersection
with every stationary subset of $\omega_{1}$ in the ground model,
one can force to kill the stationarity of $E$ in such a way that the
induced two step forcing preserves stationary subsets of
$\omega_{1}$ and produces a run of $\mathcal{SG}$ where $\unspls$
plays by $\Sigma$ and loses. If the complement of $E$ does not have
stationary intersection with some stationary $F \subset \omega_{1}$
in the ground model, then there is a partial run of the game $p$ and
a name $\tau$ for a club such that $p$ forces that $E$ will contain
$F \cap \tau_{G}$. Then there exists in the ground model a run of
$\mathcal{SG}$ extending $p$ in which $\unspls$ plays by $\Sigma$
and loses: $\spls$ picks a pair of disjoint stationary subsets
$F_{0}$, $F_{1}$ of $F$, and plays so that
\begin{itemize}
\item for every $\alpha < \omega_{1}$, some initial segment of the
play forces some ordinal greater than $\alpha$ to be in $\tau$,
\item whenever $\unspls$ accepts $\alpha \in F$, $\spls$ puts
$\alpha$ in $A$ if $\alpha \in F_{0}$ and puts $\alpha \in B$ if
$\alpha \in F_{1}$.
\end{itemize}

Now fix a strategy $\Sigma$ for $\spls$ in $\mathcal{SG}$, and
generically add a regressive function $f$ on $\omega_{1}$ by initial
segments. Let $E^{\alpha} = f^{-1}(\alpha)$ and let $A^{\alpha},
B^{\alpha}$ be the responses given by $\Sigma$ to a play of
$E^{\alpha}$ by \unspl. Note that each $E^{\alpha}$ will be
stationary.

Suppose that there exist an $\alpha < \omega_{1}$ and stationary
sets $S$, $T$ in the ground model such that  $(S \cap E^{\alpha})
\setminus A^{\alpha}$ and $(T \cap E^{\alpha}) \setminus B^{\alpha}$
are both nonstationary. Then there is a condition $p$ in our forcing
(i.e., a regressive function on some countable ordinal) such that
$p$ forces that $(S\cap E^{\alpha}) \subset A^{\alpha}$ and $(T \cap
E^{\alpha}) \subset B^{\alpha}$, modulo nonstationarity (and so in
particular $S$ and $T$ have nonstationary intersection). Let $\tau$
be a name for a club disjoint from $(S \cap E^{\alpha}) \setminus
A^{\alpha}$ and $(T \cap E^{\alpha}) \setminus B^{\alpha}$. Extend
$p$ to a filter $f$ (identified with the corresponding function)
realizing $\tau$ as a club subset of $\omega_{1}$, at successor
stages extending to add a new element to the realization of $\tau$,
and at limit stages (when for some $\beta < \omega$, $f \restrict
\beta$ has been decided and $f(\beta)$ has not, and $\beta$ is
forced by $f \restrict \beta$ to be a limit member of the
realization of $\tau$) extending so that $f(\beta) = \alpha$ if and
only if $\beta \in S$.
Then the run of $\mathcal{SG}$ corresponding to $f^{-1}(\alpha)$ is
winning for \unspl, since the corresponding set $B^{\alpha}$ is
nonstationary.

If there exist no such $\alpha$, $S$, $T$, there is a function $h$
on $\omega_{1}$ such that each $h(\alpha) \in \{A^{\alpha},
B^{\alpha}\}$ and the forcing to shoot a club through the set of
$\beta$ such that $f(\beta) = \alpha \Rightarrow \beta \in
h(\alpha)$ preserves stationary subsets of the ground model. Then
Martin's Maximum applied to the corresponding two step forcing
produces a run of $\mathcal{SG}$ (the run for any $f^{-1}(\alpha)$
which is stationary) where $\spls$ plays by $\Sigma$ and loses.
\end{proof}





Theorem \ref{mmundet} leads to the following question.

\begin{ques} Does the Proper Forcing Axiom imply that $\mathcal{SG}$
is not determined?
\end{ques}

The following question is also interesting. The consistency of the
$\aleph_{1}$-density of $NS_{\omega_{1}}$ (relative to the
consistency of AD$^{L(\mathbb{R})}$) is shown in \cite{W99}.

\begin{ques} Does the $\aleph_{1}$-density of $NS_{\omega_{1}}$
decide the determinacy of $\mathcal{SG}$?
\end{ques}

\section{MLO games}

The second-order Monadic Logic of Order (MLO) is an extension of
first-order logic with logical constants $=$, $\in$ and $\subset$
and a binary symbol $<$ as the only non-logical constant, allowing
quantification over subsets of the domain. Every ordinal is a model
for MLO, interpreting $<$ as $\in$.

Given an ordinal $\alpha$, an MLO game of length $\alpha$ is
determined by an MLO formula $\phi$ with two free variables for
subsets of the domain. In such a game, two players each build a
subset of $\alpha$, and the winner is determined by whether these
two sets satisfy the formula in $\alpha$.

B\"{u}chi and Landweber \cite{BL69} proved the determinacy of all
MLO games of length $\omega$. Recently, Shomrat \cite{Sho} extended
this result to games of length less than $\omega^{\omega}$, and
Rabinovich \cite{R} extended it further to all MLO games of
countable length. The stationary set splitting game is an example of
an MLO game of length $\omega_{1}$ whose determinacy is independent
of ZFC.

We thank Assaf Rinot for pointing out to us the connection between
$\mathcal{SG}$ and MLO games.

\noindent Department of Mathematics and Statistics,
Miami University, Oxford, Ohio 45056, USA; Email: {\tt
larsonpb@muohio.edu} \vspace{\baselineskip}

\noindent The Hebrew University of Jerusalem, Einstein Institute of
Mathematics,\\ Edmond J. Safra Campus, Givat Ram, Jersalem 91904,
Israel \vspace{\baselineskip}

\noindent Department of Mathematics, Hill Center - Busch
Campus, Rutgers, The State University of New Jersey, 110
Frelinghuysen Road, Piscataway, NJ 08854-8019, USA; Email: {\tt
shelah@math.huji.ac.il}

\end{document}